\newfont{\teneufm}{eufm10}
\newfont{\seveneufm}{eufm7}
\newfont{\fiveeufm}{eufm5}
\def\bbbc{{\mathchoice {\setbox0=\hbox{$\displaystyle\rm C$}\hbox{\hbox
to0pt{\kern0.4\wd0\vrule height0.9\ht0\hss}\box0}}
{\setbox0=\hbox{$\textstyle\rm C$}\hbox{\hbox
to0pt{\kern0.4\wd0\vrule height0.9\ht0\hss}\box0}}
{\setbox0=\hbox{$\scriptstyle\rm C$}\hbox{\hbox
to0pt{\kern0.4\wd0\vrule height0.9\ht0\hss}\box0}}
{\setbox0=\hbox{$\scriptscriptstyle\rm C$}\hbox{\hbox
to0pt{\kern0.4\wd0\vrule height0.9\ht0\hss}\box0}}}}
\def\bbbq{{\mathchoice {\setbox0=\hbox{$\displaystyle\rm
Q$}\hbox{\raise
0.15\ht0\hbox to0pt{\kern0.4\wd0\vrule height0.8\ht0\hss}\box0}}
{\setbox0=\hbox{$\textstyle\rm Q$}\hbox{\raise
0.15\ht0\hbox to0pt{\kern0.4\wd0\vrule height0.8\ht0\hss}\box0}}
{\setbox0=\hbox{$\scriptstyle\rm Q$}\hbox{\raise
0.15\ht0\hbox to0pt{\kern0.4\wd0\vrule height0.7\ht0\hss}\box0}}
{\setbox0=\hbox{$\scriptscriptstyle\rm Q$}\hbox{\raise
0.15\ht0\hbox to0pt{\kern0.4\wd0\vrule height0.7\ht0\hss}\box0}}}}
\def\bbbt{{\mathchoice {\setbox0=\hbox{$\displaystyle\rm
T$}\hbox{\hbox to0pt{\kern0.3\wd0\vrule height0.9\ht0\hss}\box0}}
{\setbox0=\hbox{$\textstyle\rm T$}\hbox{\hbox
to0pt{\kern0.3\wd0\vrule height0.9\ht0\hss}\box0}}
{\setbox0=\hbox{$\scriptstyle\rm T$}\hbox{\hbox
to0pt{\kern0.3\wd0\vrule height0.9\ht0\hss}\box0}}
{\setbox0=\hbox{$\scriptscriptstyle\rm T$}\hbox{\hbox
to0pt{\kern0.3\wd0\vrule height0.9\ht0\hss}\box0}}}}
\def\bbbs{{\mathchoice
{\setbox0=\hbox{$\displaystyle     \rm S$}\hbox{\raise0.5\ht0\hbox
to0pt{\kern0.35\wd0\vrule height0.45\ht0\hss}\hbox
to0pt{\kern0.55\wd0\vrule height0.5\ht0\hss}\box0}}
{\setbox0=\hbox{$\textstyle        \rm S$}\hbox{\raise0.5\ht0\hbox
to0pt{\kern0.35\wd0\vrule height0.45\ht0\hss}\hbox
to0pt{\kern0.55\wd0\vrule height0.5\ht0\hss}\box0}}
{\setbox0=\hbox{$\scriptstyle      \rm S$}\hbox{\raise0.5\ht0\hbox
to0pt{\kern0.35\wd0\vrule height0.45\ht0\hss}\raise0.05\ht0\hbox
to0pt{\kern0.5\wd0\vrule height0.45\ht0\hss}\box0}}
{\setbox0=\hbox{$\scriptscriptstyle\rm S$}\hbox{\raise0.5\ht0\hbox
to0pt{\kern0.4\wd0\vrule height0.45\ht0\hss}\raise0.05\ht0\hbox
to0pt{\kern0.55\wd0\vrule height0.45\ht0\hss}\box0}}}}
\def\bbbz{{\mathchoice {\hbox{$\sf\textstyle Z\kern-0.4em Z$}}
{\hbox{$\sf\textstyle Z\kern-0.4em Z$}}
{\hbox{$\sf\scriptstyle Z\kern-0.3em Z$}}
{\hbox{$\sf\scriptscriptstyle Z\kern-0.2em Z$}}}}
\newtheorem{theorem}{Theorem}
\newtheorem{lem}[theorem]{Lemma}
\def\cF{{\mathcal F}}
\def\cL{{\mathcal L}}
\def\cQ{{\mathcal Q}}
\def\cR{{\mathcal R}}
\def\cS{{\mathcal S}}
\def\({\left(}
\def\){\right)}
\def\[{\left[}
\def\]{\right]}
\def\<{\langle}
\def\>{\rangle}
\def\rf#1{\left\lceil#1\right\rceil}
\def\Z{\mathbb{Z}}
\def\Q{\mathbb{Q}}
\def\Sp {\cS_p}
\newtheorem{algorithm}[theorem]{Algorithm}
\def\mand{\qquad\mbox{and}\qquad}
\begin{document}

\title{\bf Statistics of Different Reduction Types of Fermat Curves}

\author{
{\sc   David Harvey} \\
{School of Mathematics and Statistics}\\ {University of New South Wales} \\
{Sydney, NSW 2052, Australia} \\
{\tt  d.harvey@unsw.edu.au} 
\and 
{\sc   Igor E.~Shparlinski} \\
{Department of Computing}\\ {Macquarie University} \\
{Sydney, NSW 2109, Australia} \\
{\tt  igor.shparlinski@mq.edu.au}}

\date{}

\maketitle

\begin{abstract} We present some theoretic bounds and algorithms   concerning the 
statistics  of different reduction types in the family of Fermat  curves
$Y^p = X^s(1-X)$, where $p$ is 
prime and  $s =1, \ldots, p-2$.
\end{abstract}


\section{Introduction}

For a prime $p$, we define $\Sp =\{1, \ldots, p-2\}$, and consider the family of curves
$$
\cF_s:\ Y^p = X^s(1-X), \qquad s \in \Sp,
$$
over the algebraic closure of $\Q$. 

It has been shown by McCallum~\cite{McC1} (see also~\cite{LeMcC,McC2,McCTz})
that there is a direct  link  between the properties of the reduction 
of $\cF_s$ modulo $p$ (in particular to tame, wild split and wild non-split
reductions) and the  {\it  Fermat quotients.\/}

We recall that for a prime $p$ and an integer $u$ with $\gcd(u,p) =1$
the Fermat quotient $q_p(u)$ is defined by the conditions:
$$
q_p(u) \equiv \frac{u^{p-1} -1}{p} \pmod p
\mand
0 \le q_p(u) < p.
$$
We define the sequence of Legendre symbols:
$$
\vartheta_{p,s} = \(\frac{2s(s+1) q_p\(s^s/(s+1)^{s+1}\)}{p}\),
 \qquad s \in \Sp.
$$
By a result of McCallum~\cite{McC1} (see also~\cite{LeMcC,McC2,McCTz}),
the curve $F_s$ is tame, wild split or wild non-split depending on whether 
$\vartheta_{p,s}=0$, $1$ or  $-1$, respectively.

For $\vartheta= 0,\pm 1$, we define $N_\vartheta(p)$ as the number of integers $s \in \Sp$ with $\vartheta_{p,s}=\vartheta$. A natural conjecture is that $2s(s+1) q_p\(s^s/(s+1)^{s+1}\)$ behaves uniformly randomly modulo $p$, and that the values for the various $s$ are independent. Therefore one expects $\vartheta_{p,s} = \pm 1$ both occur about half the time, and $\vartheta_{p,s} = 0$ occurs for about $1/p$ values of $s$ on average. In other words, one expects that $N_0(p)$ behaves like a Poisson random variable with mean $1$, and that $N_{\pm1}(p)$ behave like normal random variables with mean $p/2$ and variance $p/2$, as $p \to \infty$.

We show that these heuristics are not quite correct: there are \emph{relations} among the $\vartheta_{p,s}$, so that the independence hypothesis must be modified. After adjusting our heuristics to take into account these relations, we find excellent numerical agreement with a table of values of $N_{0, \pm 1}(p)$ for $p < 10^7$.

We also prove that $N_0(p) = O(p^{2/3})$. This confirms that $\vartheta_{p,s} = 0$ occurs relatively rarely. Unfortunately this bound is much weaker than the bounds suggested by our numerical experiments.

Clearly the question about the distribution 
of the values of $N_{\pm 1}(p)$ is essentially a 
question about bounding a certain sum of Legendre symbols 
with Fermat quotients. Recently there has been some
progress in the area of estimating multiplicative character sums
with Fermat quotients $q_p(u)$, see~\cite{Chang,Shp1,Shp2,Shp3}. 
However,  the results
and methods of these papers do not seem to apply to
sums corresponding to $\vartheta_{p,s}$. 

\section{Preparations}
\label{sec:prep}

Our bound of $N_0(p)$ is based on the following
bound of Heath-Brown~\cite[Lemma~2]{H-B} (see also~\cite{Mit}).
Let
$$
f(u) = \sum_{j=1}^{p-1}\frac{u^j}j.
$$

\begin{lem}\label{lem:HB}
For any integer $r$ the congruence
$$
f(u) \equiv r  \pmod p, \qquad 2 \le u \le p-1,
$$
has $O(p^{2/3})$ solutions. 
\end{lem}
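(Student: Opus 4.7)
The plan is to apply Stepanov's polynomial method, which is the standard technique producing $O(p^{2/3})$ bounds in this type of problem. Regard $f(u) = \sum_{j=1}^{p-1} u^j/j$ as a polynomial of degree $p-1$ in $\F_p[u]$. The key structural fact is that its formal derivative
$$
f'(u) \;=\; \sum_{j=1}^{p-1} u^{j-1} \;=\; \frac{u^{p-1}-1}{u-1} \;=\; \prod_{a=2}^{p-1}(u-a)
$$
in $\F_p[u]$, so $f'(a) = 0$ for every $a \in \{2, \ldots, p-1\}$. Consequently, each solution $u_0 \in \{2, \ldots, p-1\}$ of $f(u) \equiv r \pmod p$ is automatically a double root of $f(u) - r$ in $\F_p[u]$, and already one obtains the trivial bound $N \le (p-1)/2$, where $N$ denotes the number of such solutions.

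To improve this to $N = O(p^{2/3})$, I would construct an auxiliary polynomial of the form
$$
\Psi(u) \;=\; \sum_{0 \le i < I}\, \sum_{0 \le j < J} c_{ij}\, u^{i}\, \bigl(f(u)-r\bigr)^{j},
$$
where $I,J$ are integer parameters to be optimized and the coefficients $c_{ij} \in \F_p$ are to be determined. The degree satisfies $\deg \Psi \le I + J(p-1)$, and a convenient feature of this ansatz is that because $(f-r)^j$ already vanishes to order $2j$ at each solution $u_0$, demanding $\Psi$ to vanish to order $k$ at every $u_0$ translates into a highly triangular linear system in the $IJ$ unknowns $c_{ij}$. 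A standard dimension count then produces a nontrivial choice of coefficients, while the vanishing at the $N$ solutions forces $Nk \le \deg \Psi \le I + J(p-1)$. Balancing the constraints with $I \sim p^{2/3}$, $J \sim p^{1/3}$ and an appropriate order $k \sim p^{2/3}$ yields the desired bound $N = O(p^{2/3})$.

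The main obstacle, as is always the case with Stepanov-type arguments, is ensuring that the constructed $\Psi$ is not identically zero. It is enough to show that the monomials $u^{i}(f(u)-r)^{j}$, for $0 \le i < I \le p-1$ and $0 \le j < J$, are $\F_p$-linearly independent in $\F_p[u]$; this follows by inspecting leading terms, since each factor of $(f(u)-r)$ raises the degree by exactly $p-1$, producing distinct degrees $i + j(p-1)$ provided $I \le p-1$. The remainder of the proof amounts to the standard Stepanov book-keeping: quantifying carefully the effective number of linear conditions imposed at each solution (exploiting the double-vanishing above) and fine-tuning $I$, $J$, $k$ so that the number of free parameters strictly exceeds the number of conditions while $Nk$ stays at most $\deg \Psi$. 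This final calibration is the most delicate ingredient, but it is routine within the Stepanov framework and delivers the stated bound.
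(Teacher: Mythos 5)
The paper does not prove this lemma at all: it is quoted verbatim from Heath-Brown~\cite[Lemma~2]{H-B} (see also Mit'kin~\cite{Mit}), both of whom indeed use Stepanov's method, so your choice of technique is the right one, and your opening observations are correct and are the correct starting point: $f'(u)=\sum_{j=1}^{p-1}u^{j-1}=(u^{p-1}-1)/(u-1)=\prod_{a=2}^{p-1}(u-a)$, every solution is a double root of $f-r$, and the trivial bound $N\le (p-1)/2$ follows. You also correctly identify non-vanishing of the auxiliary polynomial as a critical issue.

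However, there is a fatal gap in the ``routine book-keeping'' you defer: the two-parameter ansatz $\Psi=\sum_{i<I,j<J}c_{ij}u^i(f(u)-r)^j$ cannot be calibrated to give anything better than the trivial bound, and in particular your choice $I\sim p^{2/3}$, $J\sim p^{1/3}$, $k\sim p^{2/3}$ does not work. The family has only $IJ\sim p$ free coefficients. To impose vanishing to order $k$ at \emph{all} solutions without knowing where they are, one encodes the condition ``$\Psi^{(m)}(u_0)=0$ for every solution $u_0$'' as a polynomial identity: using $f(u_0)=r$, $f'(u_0)=0$ and the recursion $(u-1)f^{(l)}(u)+(l-1)f^{(l-1)}(u)=(p-1)\cdots(p-l+1)u^{p-l}$ (together with $u_0^{p-1}=1$), one finds $\Psi^{(m)}(u_0)=T_m(u_0)/(u_0(u_0-1))^m$ with $\deg T_m\le I+m$, so the $m$-th condition costs about $I+m$ linear equations and the total cost is about $kI+k^2/2\sim p^{4/3}\gg IJ$; the triangularity you invoke (terms with $2j>m$ dropping out) reduces which unknowns appear in each equation but not the number of equations, so a nonzero $\Psi$ is not guaranteed to exist. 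Worse, no recalibration rescues this ansatz: $IJ>kI$ forces $J>k$, whence $\deg\Psi\ge (J-1)(p-1)\ge k(p-1)$ and the final bound $N\le\deg\Psi/k$ degenerates to $N\lesssim p$. (Imposing the conditions pointwise at $\min(N,p^{2/3})$ solutions instead gives $\gtrsim p^{4/3}$ conditions and fails equally.) The missing idea in the cited proofs is an additional family of building blocks --- essentially powers of $u^{p}$, which have identically vanishing derivative in $\F_p[u]$ yet restrict to the low-degree functions $u^{l}$ on $\F_p^{*}$ --- which multiplies the number of free coefficients without inflating the cost of the derivative conditions; once such factors are introduced, your linear-independence argument via distinct degrees $i+j(p-1)$ also breaks down (the degrees $i+j(p-1)+lp$ collide), and the non-vanishing of $\Psi$ requires a genuinely more delicate argument. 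As it stands, the proposal establishes only $N\le(p-1)/2$.
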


We also recall some notions and the results from 
the  uniform distribution theory, see~\cite{DrTi}.
As usual, we define the {\it 
discrepancy\/} $\Delta(\Gamma)$ of a sequence 
$\Gamma= \(\gamma_{m}\)_{m=1}^{M}$ of $M$
(not necessarily distinct) points
in the  unit interval $[0,1]$ by
$$
\Delta(\Gamma) = \sup_{0 \le \gamma\le 1}
\left|\frac{I_\Gamma(\gamma)} {M} - \gamma\right|,
$$
where $I_\Gamma(\gamma)$ is the number of points $\gamma_m$ of  
$\Gamma$ with  $\gamma_m \le \gamma$. 

We trivially have:

\begin{lem}\label{lem:DiscrStable}
Let $\varPhi= \(\varphi_{m}\)_{m=1}^{M}$ and 
$\varPsi= \(\psi_{m}\)_{m=1}^{M}$ be two sequences of $M$
points
in the  unit interval $[0,1]$ with $|\varphi_{m}-\psi_{m}| \le \delta$, 
$m =1, \ldots, M$. Then for their discrepancies we have 
$\Delta(\varPhi) = \Delta(\varPsi) + O(\delta)$.
\end{lem}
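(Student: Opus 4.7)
The plan is to pin down an explicit comparison between the counting functions $I_\varPhi$ and $I_\varPsi$ at perturbed arguments, then take suprema. Fix $\gamma\in[0,1]$. Since $|\varphi_m-\psi_m|\le\delta$, if $\psi_m\le\gamma-\delta$ then $\varphi_m\le\gamma$, and conversely if $\varphi_m\le\gamma$ then $\psi_m\le\gamma+\delta$. Thus
$$
I_\varPsi(\gamma-\delta)\;\le\;I_\varPhi(\gamma)\;\le\;I_\varPsi(\gamma+\delta),
$$
with the convention $I_\varPsi(\gamma)=0$ for $\gamma<0$ and $I_\varPsi(\gamma)=M$ for $\gamma>1$ (the inequalities above remain valid trivially in these boundary regimes).

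Next I rewrite each side by inserting $\gamma\pm\delta$. From the definition of $\Delta(\varPsi)$,
$$
\frac{I_\varPsi(\gamma+\delta)}{M}-\gamma\;=\;\Bigl(\frac{I_\varPsi(\gamma+\delta)}{M}-(\gamma+\delta)\Bigr)+\delta\;\le\;\Delta(\varPsi)+\delta,
$$
and similarly
$$
\frac{I_\varPsi(\gamma-\delta)}{M}-\gamma\;=\;\Bigl(\frac{I_\varPsi(\gamma-\delta)}{M}-(\gamma-\delta)\Bigr)-\delta\;\ge\;-\Delta(\varPsi)-\delta.
$$
Sandwiching $I_\varPhi(\gamma)/M-\gamma$ between these two quantities gives $|I_\varPhi(\gamma)/M-\gamma|\le\Delta(\varPsi)+\delta$, and taking the supremum over $\gamma$ yields $\Delta(\varPhi)\le\Delta(\varPsi)+\delta$. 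Swapping the roles of $\varPhi$ and $\varPsi$ gives the reverse inequality, so $|\Delta(\varPhi)-\Delta(\varPsi)|\le\delta$, which is exactly the claimed statement $\Delta(\varPhi)=\Delta(\varPsi)+O(\delta)$.

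There is no real obstacle here: the only thing to watch is the cosmetic issue of $\gamma\pm\delta$ falling outside $[0,1]$, which is handled by the natural extension of $I_\varPsi$ as above. The argument is a standard translation-of-threshold comparison for empirical distribution functions, and the constant implicit in the $O(\delta)$ is $1$.
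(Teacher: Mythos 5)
Your proof is correct: the sandwich $I_\varPsi(\gamma-\delta)\le I_\varPhi(\gamma)\le I_\varPsi(\gamma+\delta)$ followed by shifting the threshold and taking suprema is the standard argument, the boundary cases are handled properly, and you even extract the explicit constant $1$ in the $O(\delta)$. The paper gives no proof at all (it introduces the lemma with ``We trivially have''), so there is nothing to compare against; your write-up simply supplies the routine details the authors omitted.
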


The following result about the distribution of modular 
inverses is well-known, see~\cite[Section~3]{Shp4} for a
survey of several similar estimates. It has appeared in many works, and
follows instantly from the Weil bound for
Kloosterman sums, see~\cite[Theorem~11.11]{IwKow}.

\begin{lem}\label{lem:Invers}
For any prime $\ell$ and integers $0 < K < K+ R < \ell$ the discrepancy of the sequence 
$\overline r/\ell$, $r \in [K,K+R]$ is $O\(R^{-1} \ell^{1/2}(\log \ell)^{2}\)$,
where $\overline r$ is defined by the congruence
$$
\overline r r\equiv 1  \pmod \ell , \qquad 1 \le \overline r \le \ell-1.
$$ 
\end{lem}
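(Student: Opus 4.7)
The plan is to combine the Erd\H{o}s--Tur\'an inequality with the Weil bound for complete Kloosterman sums, via the standard completion technique. Throughout, write $e_\ell(x) = \exp(2\pi i x/\ell)$ and $M = R+1$; note that since $\ell$ is prime and $1 \le r \le \ell-1$, the inverse $\overline r$ is well-defined for every $r$ in the range.

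First I would invoke the Erd\H{o}s--Tur\'an inequality, which for every integer $H\ge 1$ gives
$$
\Delta \ll \frac{1}{H} + \frac{1}{M}\sum_{h=1}^{H}\frac{1}{h}\left|\sum_{r=K}^{K+R} e_\ell(h\overline r)\right|.
$$
Thus everything reduces to bounding the incomplete exponential sum $S_h = \sum_{r=K}^{K+R} e_\ell(h\overline r)$ for $1 \le h \le H < \ell$, in which case $\gcd(h,\ell)=1$.

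Next I would carry out the completion step. Writing the indicator of $[K,K+R]$ through orthogonality on $\Z/\ell\Z$,
$$
\mathbf{1}_{[K,K+R]}(r) = \frac{1}{\ell}\sum_{c=0}^{\ell-1}\left(\sum_{s=K}^{K+R} e_\ell(-cs)\right) e_\ell(cr),
$$
one finds
$$
S_h = \frac{1}{\ell}\sum_{c=0}^{\ell-1}\left(\sum_{s=K}^{K+R} e_\ell(-cs)\right) K(h,c;\ell),
$$
where $K(h,c;\ell) = \sum_{r=1}^{\ell-1} e_\ell(h\overline r + cr)$. The Weil bound gives $|K(h,c;\ell)| \le 2\sqrt{\ell}$ whenever $\gcd(h,\ell)=1$ and $c \ne 0$, while the $c=0$ term contributes $O(R/\ell)$. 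Using the standard estimate $\sum_{c=1}^{\ell-1}\min(R,\,\ell/|c|) \ll \ell\log\ell$ for the geometric factors, this yields
$$
|S_h| \ll \sqrt{\ell}\,\log\ell
$$
uniformly in $h$.

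Finally, I would insert this back into the Erd\H{o}s--Tur\'an bound to obtain
$$
\Delta \ll \frac{1}{H} + \frac{\sqrt{\ell}\,\log\ell}{R}\sum_{h=1}^{H}\frac{1}{h} \ll \frac{1}{H} + \frac{\sqrt{\ell}\,(\log\ell)(\log H)}{R},
$$
and choose $H$ of order $R/\sqrt{\ell}$ (which is admissible provided $R > \sqrt{\ell}$; otherwise the asserted bound is trivial since $\Delta \le 1$). This optimization yields the claimed estimate $\Delta = O\bigl(R^{-1}\ell^{1/2}(\log\ell)^{2}\bigr)$.

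There is no real obstacle here; the only mildly technical point is the completion step, and the paper itself notes that the lemma is essentially immediate from Weil's bound. The argument works verbatim after checking that the $c=0$ and large-$|c|$ contributions behave as claimed.
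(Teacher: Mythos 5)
Your argument is correct and is exactly the route the paper has in mind: the paper states Lemma~\ref{lem:Invers} without proof, remarking only that it ``follows instantly from the Weil bound for Kloosterman sums,'' and your Erd\H{o}s--Tur\'an plus completion plus Weil write-up is precisely the standard derivation being cited. The parameter choice $H \asymp R/\sqrt{\ell}$ (or simply $H=\ell$) and the trivial-case remark for $R \le \sqrt{\ell}$ are both fine, so nothing is missing.
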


Let, as usual, $\pi(X)$ denote the number of primes $\ell \le X$.
Then we have the following lower bound, due to Baker, Harman and Pintz~\cite{BaHaPi} 
for primes in short intervals, see also~\cite[Section~10.5]{IwKow} for several
related results. 

\begin{lem}
\label{lem:Primes}
There in an absolute constant $c> 0$ such that for $Y \ge X^{0.525}$ 
we have $\pi(X+Y)-\pi(X) \ge c Y/\log Y$.
\end{lem}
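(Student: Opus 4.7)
The statement is the celebrated result of Baker, Harman and Pintz~\cite{BaHaPi}, and any honest proof plan is really a reconstruction of their argument via Harman's asymmetric sieve. The overall strategy is to work with the sifting function $S(\cA, z)$, where $\cA = (X, X+Y]\cap \Z$ and $z = X^{1/2}$; apart from the negligible contribution of prime powers, $S(\cA, X^{1/2})$ equals $\pi(X+Y)-\pi(X)$, so it suffices to prove $S(\cA, X^{1/2}) \gg Y/\log X$.

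The machinery is Buchstab's identity applied repeatedly, which decomposes $S(\cA, X^{1/2})$ into a main term plus a collection of sifting sums. Each such sum is, after unwinding, of the form
$$
\sum_{\substack{m_1 \cdots m_k \in \cA \\ m_i \in I_i}} 1,
$$
with prescribed dyadic ranges $I_i$. Via Perron's formula these become contour integrals of products of Dirichlet polynomials on the critical line, and one needs mean value theorems in short $t$-intervals of length about $T/Y$ (Heath-Brown's large value results, Watt's fourth moment, and Deshouillers--Iwaniec bounds for sums of Kloosterman sums) to estimate each piece satisfactorily.

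The hard part is the combinatorial juggling of the decomposition: in order to push the exponent down to $0.525$ (below Huxley's $7/12$ and successive refinements by Heath-Brown, Iwaniec, Lou--Yao, and Baker--Harman) one must arrange the iterated Buchstab identity so that every resulting piece is either (i) a \emph{Type I} or \emph{Type II} bilinear form whose dyadic ranges land in a region where the above Dirichlet polynomial estimates apply, or (ii) a nonnegative sifting quantity that may be discarded at the cost only of a harmless reduction in the constant. Baker, Harman and Pintz execute this optimization very delicately, the real bottleneck being sharp new triple-product estimates for Dirichlet polynomials that are barely strong enough for the chosen exponent. For the present application, as the authors do, I would simply quote~\cite{BaHaPi} as a black box rather than reproduce the sieve analysis.
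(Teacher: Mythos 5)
Your proposal is correct and matches the paper's treatment: the authors give no proof of this lemma, simply citing Baker, Harman and Pintz~\cite{BaHaPi} as a black box, exactly as you conclude one should. Your sketch of the underlying sieve and Dirichlet-polynomial machinery is an accurate summary of what that reference actually does, but none of it is needed (or reproduced) in this paper.
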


Finally we need some algorithmic results. 
We measure the complexity of our algorithms in the
so-called {\it RAM model of computation\/}, 
see~\cite{vzGG} for a background.

Since we are mostly interested in theoretic estimates,
we always assume that  fast arithmetic of long integers is 
used and in particular any arithmetic operation on $n$-bit 
integers can be performed in time $n^{1+o(1)}$ as $n\to\infty$,
see~\cite[Theorem~8.23]{vzGG}.

Besides we recall that for any prime $p$ and 
integers $a$, $b$ and $k$, all at most $n$-bits long, we can 
\begin{itemize}
\item compute the residue $a^k \pmod p$ in time $n^{2+o(1)}$, see~\cite[Section~4.3]{vzGG}.
\item compute the Legendre symbol $(a/p)$ in time $n^{1+o(1)}$, 
see~\cite{BrZim};
\item find an integer solution $(u,v)$ of the linear equation $au-bv=1$
in time $n^{1+o(1)}$,  see~\cite{Moll}.
\end{itemize}

We use these estimates throughout Sections~\ref{sec:N exact}
and~\ref{sec:N approx}.

\section{Bounding $N_{0}(p)$}
\label{sec:bounding}

The upper bound for $N_0(p)$ is immediate from 
Lemma~\ref{lem:HB}.

\begin{theorem}\label{thm:N0}
We have
$$
N_0(p) = O(p^{2/3}). 
$$
\end{theorem}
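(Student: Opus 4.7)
The plan is to reduce the condition $\vartheta_{p,s} = 0$ to a congruence of the form $f(u) \equiv 0 \pmod p$ and then quote Lemma~\ref{lem:HB} with $r = 0$. First I would unravel the Legendre symbol: for $s \in \Sp$ both $s$ and $s+1$ are nonzero modulo $p$, so the factor $2s(s+1)$ is a unit mod $p$, and $\vartheta_{p,s} = 0$ if and only if $q_p(s^s/(s+1)^{s+1}) \equiv 0 \pmod p$. The standard logarithmic properties of Fermat quotients, namely $q_p(ab) \equiv q_p(a)+q_p(b)$ and $q_p(a^k)\equiv k\,q_p(a) \pmod p$, then rewrite this as
$$
(s+1)\,q_p(s+1) - s\,q_p(s) \equiv 0 \pmod p.
$$

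Next I would connect the left-hand side to $f$ via the classical binomial--Fermat quotient identity. Evaluating $(s+1)^p - s^p - 1$ in two different ways --- once by expanding $(s+1)^p$ through the binomial theorem and using the congruence $\binom{p}{j}/p \equiv (-1)^{j-1}/j \pmod p$ valid for $1\le j\le p-1$, and once by writing $(s+1)^p = (s+1)+p(s+1)\,q_p(s+1)$ together with $s^p = s + p\,s\,q_p(s)$ --- yields
$$
(s+1)\,q_p(s+1) - s\,q_p(s) \equiv \sum_{j=1}^{p-1} \frac{(-1)^{j-1}}{j}\, s^j \;=\; -f(-s) \pmod p.
$$
Combining this with the previous paragraph shows that $\vartheta_{p,s} = 0$ if and only if $f(-s) \equiv 0 \pmod p$.

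Finally, as $s$ ranges over $\Sp = \{1,\ldots,p-2\}$, the residue $-s \bmod p$ ranges bijectively over $\{2,\ldots,p-1\}$, so Lemma~\ref{lem:HB} applied with $r=0$ immediately bounds the number of such $s$ by $O(p^{2/3})$. The only step requiring any real computation is the binomial identity in the middle paragraph; no substantive obstacle is expected, consistent with the paper's remark that the theorem is immediate from Lemma~\ref{lem:HB}.
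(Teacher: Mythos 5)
Your proof is correct and follows essentially the same route as the paper: both reduce $\vartheta_{p,s}=0$ to a congruence $f(u)\equiv 0 \pmod p$ via the logarithmic property of Fermat quotients together with the binomial congruence $\binom{p}{j}/p \equiv (-1)^{j-1}/j \pmod p$, and then invoke Lemma~\ref{lem:HB} with $r=0$. The only cosmetic difference is that you expand $(s+1)^p - s^p - 1$ yourself and land at $f(-s)$ with the substitution $u=-s$, whereas the paper cites Heath-Brown's equivalent identity $\bigl(s^{p} - (s+1)^{p} + 1\bigr)/p \equiv f(s+1) \pmod p$ and substitutes $u=s+1$.
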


\begin{proof}
It is easy to verify that 
\begin{equation}
\label{eq:MultProp}
q_p(uv) \equiv q_p(u) + q_p(v) \pmod p
\end{equation}
for any integers $u$ and $v$ with $\gcd(uv,p) = 1$,
see, for example,~\cite[Equation~(3)]{ErnMet}.

Therefore 
\begin{equation}
\label{eq:qo expan}
\begin{split}
q_p\(s^s/(s+1)^{s+1}\) & \equiv sq_p(s) - (s+1)q_p(s+1)\\
& \equiv
\frac{s^{p} - (s+1)^{p} + 1}{p} \pmod p.
\end{split}
\end{equation}

It is shown by Heath-Brown~\cite[Section~1]{H-B} 
that for $1 \le s \le p-2$
$$
\frac{s^{p} - (s+1)^{p} + 1}{p}\equiv f(s+1) \pmod p,
$$
where $f(u)$ is as in Section~\ref{sec:prep}.
Applying Lemma~\ref{lem:HB} we conclude the proof.
\end{proof} 

Therefore the curve $Y^p = X^s(1-X)$ is tame for at 
most $O(p^{2/3})$ positive integers $s \le p-1$. 
 
It has been shown in the proof of~\cite[Lemma~6.1]{McCTz}
that for $p\equiv 1 \pmod 3$ we have $\vartheta_{p,s} = 0$ 
for both roots $s$ of the congruence $s^2 + s + 1 \equiv 0 \pmod p$. 
So $N_0(p)\ge 2$ for $p\equiv 1 \pmod 3$. We are not 
aware of any other lower bounds.

\section{Computing  $N_{\vartheta}(p)$}
\label{sec:N exact}

We start with an observation that the known algorithmic results 
presented in in Section~\ref{sec:prep} imply that 
the  values of $N_{0,\pm 1}(p)$ can be computed 
directly from the definition  in time and space
$$
T = p (\log p)^{2+o(1)} \mand S = O(\log p),
$$
respectively.   Indeed, this follows instantly from the 
congruence
$$
q_p\(s^s/(s+1)^{s+1}\)  \equiv sq_p(s) - (s+1)q_p(s+1)   \pmod p
$$
that is based on~\eqref{eq:MultProp} and which 
we have used in the proof of Theorem~\ref{thm:N0}.

If memory is not of concern, we can
simply compute the table of the values of $q_p(s)$,  
$s =1, \ldots, p-2$,  in $O(1)$ arithmetic operations 
modulo $p$
per value,  see~\cite[Theorem~7]{OstShp}.  After this, using fast arithmetic 
for the Legendre symbol, we can compute $N_{0,\pm 1}(p)$,
in time and space
$$
T = p (\log p)^{1+o(1)} \mand S = O(p\log p),
$$
respectively.

Furthermore, using~\cite[Algorithm~8]{OstShp} one can have some
trade-off between the space complexity and running time of
computing $N_\vartheta(p)$.  Indeed, for any parameter  $Z\ge 2$, we can 
evaluate in time  $pZ^{-1}(\log p)^{1+o(1)}$ a certain table 
(of size $O\(pZ^{-1}\log p\)$) such that after this 
for each $s =1, \ldots, p-2$ we can compute  
$\vartheta_{p,s}$ in time $(\log p)^{1+o(1)} \log Z$,
see~\cite[Theorem~9]{OstShp}.
Thus for any $Z\ge 2$,  we can compute $N_{0,\pm 1}(p)$ in time and space
$$
T = p(\log p)^{1+o(1)} \log Z \mand S =O\(pZ^{-1}\log p\),
$$
respectively.   Clearly $Z=p$ corresponds to the above trivial 
algorithm. However, taking $Z = \exp\(\sqrt{\log p}\)$ we see that we 
can compute $N_{0,\pm 1}(p)$ in 
time $p (\log p)^{3/2+o(1)}$ and space $p \exp\(-(1+o(1))\sqrt{\log p}\)$.
 
\section{Approximating $N_{\vartheta}(p)$}
\label{sec:N approx}

We now design  Quasi-Monte Carlo type algorithms that evaluate 
$\vartheta_{p,s}$ on a sequence of $s$ that is asymptotically uniformly 
distributed in the interval $[1, p-2]$ and have much 
more modest space requirements that the algorithms of Section~\ref{sec:N exact}.

Let 
$$U = \rf{p^{1/2}}
\mand
\Delta=\rf{p^{3/8} \log p}.
$$
We now precompute and store the table $\cQ$ of values $q_p(w)$, $1\le w\le U$, which 
can be done in 
in time and space
\begin{equation}
\label{eq: preproc}
T = p^{1/2} (\log p)^{2+o(1)} \mand S = O(p^{1/2} \log p),
\end{equation}
respectively. This is the cost of preprocessing.

Let  $\cL$ be the set of primes $\ell \in [U-\Delta, U]$
and let $\cR$ be the set of integers $r\in  [U-3\Delta, U-2\Delta]$. 
We now recall the complexity bounds of Section~\ref{sec:prep}
and proceed as follows:

\begin{algorithm}[Using Linear Equations] \qquad   
  \label{alg:LE}
  
\begin{description}

\item[Step 1:] Select  at random a prime $\ell \in \cL$ and an integer $r\in \cR$.

\item[Step 2:] Find positive integers $u < r$ and $v < \ell$ with $rv - \ell u = 1$ 
in time $(\log p)^{1+o(1)}$.

\item[Step 3:]  Set $s = \ell u$.

\item[Step 4:]  Using the precomputed table $\cQ$ and applying~\eqref{eq:MultProp}, 
we compute 
$$
q_p\(s^s/(s+1)^{s+1}\)  \equiv sq_p(\ell)q_p(u) - (s+1)q_p(r)q_p(v)   \pmod p,
$$
and then $\vartheta_{p,s}$, in time $(\log p)^{1+o(1)}$.
\end{description}
\end{algorithm}

Thus after the preprocessing with the cost given by~\eqref{eq: preproc}
we compute $\vartheta_{p,s}$ for every $s \in \cS$ in essentially 
linear time, where $\cS$ is the set of $s$ generated at Step~2 
of the above algorithm. That is,
$$
\cS = \{s = \ell u~:~ rv - \ell u = 1, \ \ell \in \cL, \ r\in \cR, \ 0 < u < r\}.
$$
We now show that indeed the above algorithm samples $s$ in a
reasonably uniform fashion.

\begin{theorem}\label{thm:Discr S}
The discrepancy of the sequence $(s/p)_{s \in \cS}$
is $ O\(p^{-1/8} \log p \)$.
\end{theorem}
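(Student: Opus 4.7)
The plan is to exploit the identity $s+1=rv$, which follows from $rv-\ell u=1$ and $s=\ell u$; here $v\equiv r^{-1}\pmod{\ell}$ with $1\le v<\ell$. Since passing from $(s+1)/p$ to $s/p$ is a uniform shift by $-1/p$, Lemma~\ref{lem:DiscrStable} ensures it is enough to bound the discrepancy of the sequence $(rv/p)_{(\ell,r)\in\cL\times\cR}$.

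First, I would fix $\ell\in\cL$ and analyse the subsequence indexed by $r\in\cR$. Factoring $rv/p=(r\ell/p)(v/\ell)$, I observe that as $r$ ranges over the window $\cR$ of length $\Delta$, the quantity $r\ell/p$ varies by at most $\Delta\ell/p=O(p^{-1/8}\log p)$. Replacing $r\ell/p$ by its midpoint value $c_\ell$ therefore perturbs each term of $(rv/p)_{r\in\cR}$ by $O(p^{-1/8}\log p)$, and by Lemma~\ref{lem:DiscrStable} the discrepancy changes by the same order. Next, Lemma~\ref{lem:Invers} applied with prime $\ell\asymp p^{1/2}$ and interval $\cR$ of length $|\cR|\asymp\Delta=p^{3/8}\log p$ gives that the sequence of modular inverses $(v/\ell)_{r\in\cR}$ has discrepancy $O(|\cR|^{-1}\ell^{1/2}(\log\ell)^2)=O(p^{-1/8}\log p)$. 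Note that $c_\ell$ lies within $O(p^{-1/8}\log p)$ of $1$ and is less than $1$, so rescaling confines $(c_\ell v/\ell)_{r\in\cR}$ to $(0,c_\ell)$; a direct comparison shows that the discrepancy of this rescaled sequence relative to uniform distribution on $(0,1)$ exceeds the one on $(0,c_\ell)$ by at most $1-c_\ell=O(p^{-1/8}\log p)$. Combining these pieces, for every fixed $\ell$ the subsequence of $(s/p)$ indexed by $r\in\cR$ has discrepancy $O(p^{-1/8}\log p)$.

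Finally, the full sequence $(s/p)_{s\in\cS}$ is the disjoint union of $|\cL|$ such subsequences of common cardinality $|\cR|$, each with target uniform distribution on $(0,1)$. The counting function of the union is therefore the uniform average of the individual counting functions, so by the triangle inequality the overall discrepancy is bounded by the maximum of the individual discrepancies, yielding the claimed $O(p^{-1/8}\log p)$. The main delicate point is really the consistency check that the three competing errors (the variation of $r\ell/p$ across $\cR$, the discrepancy bound from Lemma~\ref{lem:Invers}, and the deficit $1-c_\ell$) are all of the same order; this is exactly what pins down the choice $\Delta=\lceil p^{3/8}\log p\rceil$. The size of $\cL$, positive for large $p$ by Lemma~\ref{lem:Primes}, plays no quantitative role beyond guaranteeing that $\cS$ is nonempty.
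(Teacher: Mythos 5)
Your argument is correct and follows essentially the same route as the paper: for each fixed $\ell$ you reduce $s/p$ to $v/\ell$ up to a perturbation of size $O(U\Delta/p)=O(p^{-1/8}\log p)$ (handled via Lemma~\ref{lem:DiscrStable}), invoke Lemma~\ref{lem:Invers} with $R=\Delta$ and $\ell\asymp p^{1/2}$ to get the matching bound $O(\Delta^{-1}\ell^{1/2}(\log\ell)^2)$, and then observe that the discrepancy of the union of the $\ell$-indexed blocks is at most the maximum of the blockwise discrepancies. Your midpoint-plus-rescaling treatment of the factor $r\ell/p$ is just a more explicit rendering of the paper's one-line estimate $s/p=v/\ell+O(U\Delta/p)$, so there is no substantive difference.
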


\begin{proof} First of all we note that Lemma~\ref{lem:Primes}
applies to the set $\cL$ so it is not empty and contains at 
least $c_0\Delta/\log p$ primes, for some absolute 
constant $c_0>0$. 
 Furthermore, distinct 
pairs $(\ell, r) \in \cL\times \cR$ lead to distinct products
$s = \ell u$.

 Clearly for $\ell \in \cL$ and  $r\in \cR$ we have 
$$
\ell r  = U^2 + O(U\Delta) = p + O(U\Delta).
$$
Therefore, 
$$
\frac{s}{p}   = \frac{\ell u}{p} = \frac{rv}{p} - \frac{1}{p} = \frac{rv}{\ell r +  O(U\Delta)} - \frac{1}{p} = 
 \frac{v}{\ell} + O\(\frac{U\Delta}{p}\) .
$$

Using Lemmas~\ref{lem:DiscrStable} and~\ref{lem:Invers},
we see that the discrepancy of the sequence of fractions $s/p$
with  $s \in \cS$ corresponding to a given value of $\ell$ is
$$
O\(\Delta^{-1} U^{1/2}(\log U)^{2} + U\Delta p^{-1} \)
= O\(p^{-1/8} \log p \).
$$ 
Obviously the discrepancy of the entire sequence $(s/p)_{s \in \cS}$ also satisfies the same
bound. 
\end{proof} 

Unfortunately the proof of  Theorem~\ref{thm:Discr S} 
takes no advantage of averaging over $\ell$. So, this 
certainly can be a way to obtain a further improvement. 
In particular, it is possible that the switching over argument
of Fouvry~\cite{Fouv} maybe of help here. 

The above approach is based on constructing 
suitable values of $s$ from solutions of linear equations 
with the coefficients $\ell$ and $r$ running independently through some 
prescribed sets. There are various possible modifications that may lead to algorithmic advantages, and there are a variety of results~\cite{DiSi,Dolg,Fuji,Rieg,Shp0} that can be used to prove
the analogues of Theorem~\ref{thm:Discr S}
about the uniformity of distribution of the corresponding values of $s$. 

Clearly Algorithm~\ref{alg:LE} makes sense only if one intends to compute $\vartheta_{p,s}$ 
for at least $p^{1/2} \log p$ values of $s$.

\section{Relations between $\vartheta_{p,s}$}
\label{sec:relations}

Define permutations $F, G : \Sp \to \Sp$ by
 $$ F(s)  \equiv  - 1 - s \pmod p, \qquad G(s)  \equiv  1/s \pmod p. $$
One checks that $F^2 = G^2 = (FG)^3 = \text{id}_{\Sp}$, so the group $H$ generated by $F$ and $G$ contains at most six distinct permutations, namely $1$, $F$, $G$, $FG$, $GF$, and $FGF$ ($=GFG$). Furthermore, $F$, $G$ and $FGF$ each have a single fixed point (respectively $-1/2$, $1$, and $-2$ modulo $p$), and the fixed points of $FG$ and $GF$ are the roots of $x^2 + x + 1 = 0 \pmod p$ (if they exist).

From these facts one can easily determine the number of orbits of $\Sp$ under $H$, as follows. Assume that $p \geq 11$, so that $-1/2, 1, -2$, and the roots of $x^2 + x + 1 = 0 \pmod p$, are all distinct modulo $p$. First suppose that $p = 1 \pmod 3$. Let $s_0$ and $s_1$ be the roots of $x^2 + x + 1 = 0 \pmod p$. Note that $F$ and $G$ interchange $s_0$ and $s_1$. Thus there are precisely $(p+5)/6$ orbits, namely $\{-1/2, 1, -2\}$, $\{s_0, s_1\}$, and $(p-7)/6$ orbits of order 6. Now suppose that $p = 2 \pmod 3$. Then $x^2 + x + 1 = 0 \pmod p$ has no roots, and we obtain $(p+1)/6$ orbits, namely $\{-1/2, 1, -2\}$ and $(p-5)/6$ orbits of order $6$.

In all cases, we see that there are $p/6 + O(1)$ orbits. Next we show that $\vartheta_{p,s}$ is \emph{constant} on each orbit.

\begin{theorem}
We have $\vartheta_{p, F(s)} = \vartheta_{p, s}$ and $\vartheta_{p, G(s)} = \vartheta_{p, s}$.
\end{theorem}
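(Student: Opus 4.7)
The plan is to encode both invariances as identities modulo $p$ for the quantity
$$
\Theta(s) := 2s(s+1)\bigl[sq_p(s) - (s+1)q_p(s+1)\bigr],
$$
which by \eqref{eq:qo expan} satisfies $\vartheta_{p,s} = \bigl(\Theta(s)/p\bigr)$. The ingredients are the multiplicative relation \eqref{eq:MultProp}, the identities $q_p(-u)\equiv q_p(u) \pmod p$ (since $(-u)^{p-1} = u^{p-1}$) and $q_p(1/u)\equiv -q_p(u) \pmod p$ (from $q_p(1) = 0$), and the standard shift formula $q_p(u+p) \equiv q_p(u) - 1/u \pmod p$, the last being a one-line expansion of $(u+p)^{p-1}$.

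A preliminary step is to verify that $\Theta(s) \bmod p$ depends only on $s \bmod p$, provided $s\not\equiv 0,-1 \pmod p$. Substituting $s+p$ in place of $s$ and applying the shift formula produces extra terms $p\,q_p(s)$, $p\,q_p(s+1)$, and $\mp\,p/s$, $\pm\,p/(s+1)$, all of which vanish modulo $p$; hence $\Theta(s+p)\equiv\Theta(s) \pmod p$. Thus $\Theta$ descends to a function on $(\Z/p\Z) \setminus \{0,-1\}$, and it suffices to establish the two invariances as identities between residue classes.

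For $F(s)\equiv -1-s \pmod p$, I plug $-1-s$ into the definition of $\Theta$. The prefactor $2(-1-s)(-s)$ equals $2s(s+1)$, and $q_p(-1-s)\equiv q_p(s+1)$, $q_p(-s)\equiv q_p(s)$, so collecting terms gives $\Theta(-1-s) \equiv \Theta(s) \pmod p$ directly. For $G(s)\equiv 1/s \pmod p$, I plug $1/s$ into $\Theta$ and use $q_p(1/s)\equiv -q_p(s) \pmod p$ together with $q_p((s+1)/s) \equiv q_p(s+1) - q_p(s) \pmod p$; after simplification the result is
$$
\Theta(1/s) \equiv \frac{\Theta(s)}{s^4} \pmod p.
$$
Since $s^4$ is a nonzero square modulo $p$, the Legendre symbol is unchanged, and $\vartheta_{p, G(s)} = \vartheta_{p,s}$ follows. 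The only real obstacle is the careful bookkeeping of signs and inversions in these expansions; no deeper ingredient is required.
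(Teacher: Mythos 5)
Your overall strategy --- packaging everything into $\Theta(s) = 2s(s+1)\bigl[sq_p(s)-(s+1)q_p(s+1)\bigr]$, checking that $\Theta \bmod p$ depends only on $s \bmod p$, and then computing at convenient representatives --- is essentially the paper's computation reorganised, and the $F$-part goes through: $q_p(-u)=q_p(u)$ holds exactly for integers, so $\Theta(-1-s)=\Theta(s)$ on the nose, and your periodicity step transfers this to the representative $p-1-s \in \Sp$. (A minor bookkeeping point: in the periodicity check the expansion of $(s+p)q_p(s+p)$ also produces a term $-1$, matched by a $+1$ from the $(s+p+1)$-term; these cancel each other rather than individually vanishing modulo $p$, but the conclusion stands.)

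The $G$-part, however, rests on a false identity. For an integer $t$ with $st \equiv 1 \pmod p$ one does \emph{not} have $q_p(t) \equiv -q_p(s) \pmod p$: writing $st = 1+kp$, the relation \eqref{eq:MultProp} gives $q_p(t) \equiv q_p(1+kp) - q_p(s) \equiv -k - q_p(s) \pmod p$, and $k \not\equiv 0 \pmod p$ in general. Concretely, for $p=5$, $s=2$, $t=3$ one has $q_5(3)=1$ while $-q_5(2)\equiv 2$. The error is the step ``$q_p(s\cdot s^{-1}) = q_p(1) = 0$'': the Fermat quotient is not a function of the residue modulo $p$ (that is exactly what makes it nontrivial), so $q_p(1+kp)\neq q_p(1)$. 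Likewise $q_p(t+1) \equiv q_p(s+1) - k/(s+1) - q_p(s)$ carries a $k$-term. Your periodicity lemma does not repair this: it only licenses replacing an integer argument by another integer in the same class, and says nothing about evaluating $q_p$ at the ``rational'' $1/s$. The missing step --- which is precisely the crux of the paper's proof --- is to track these $k$-terms and verify that they cancel in the combination $q_p(t) - (s+1)q_p(t+1)$, leaving $sq_p(s)-(s+1)q_p(s+1)$. Your formal manipulation lands on the correct $\Theta(1/s)\equiv \Theta(s)/s^4$ only because of this unproved cancellation. (One could instead make your route rigorous by extending $q_p$ to $p$-integral rationals via $q_p(a/b)=q_p(a)-q_p(b)$ and proving the shift formula and the periodicity lemma in that extended setting, but that has to be done explicitly; as written, the argument for $\vartheta_{p,G(s)}=\vartheta_{p,s}$ has a genuine gap.)
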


\begin{proof}
We will use the fact that for any $u, v \in \Z$, with $u \neq 0 \pmod p$,
\begin{equation}
\label{eq:mod p}
\begin{split}
 q_p(u + vp) &  \equiv  \frac{(u+vp)^{p-1} - 1}p \\
              &  \equiv  \frac{u^{p-1} + (p-1)u^{p-2} vp - 1}{p} \\
              &  \equiv  q_p(u) - \frac{v}{u} \pmod p, 
\end{split}
\end{equation}
see also~\cite[Equation~(2)]{ErnMet}. 
For the first relation,
\begin{align*}
 \vartheta_{p, F(s)}
   & = \left(\frac{2(-s-1)(-s) \((-s-1) q_p(p-1-s) - (-s) q_p(p-s)\)}p\right) \\
   & = \left(\frac{2s(s+1)\(s q_p(s - p) - (s+1) q_p(s + 1 - p) \)}p\right).
\end{align*}
The result then follows from 
$$q_p(s-p)  \equiv  q_p(s) - \frac1s \pmod p
$$
and
$$
q_p(s+1-p)  \equiv q_p(s+1) - \frac1{s+1} \pmod p.
$$ 
For the second relation,
\begin{align*}
 \vartheta_{p, G(s)}
   & = \left(\frac{2 s^{-1} (s^{-1} + 1)\(\frac1s q_p(G(s)) - (s^{-1} + 1) q_p(G(s) + 1)\)}p\right) \\
   & = \left(\frac{2 s(s+1)\(q_p(G(s)) - (s+1) q_p(G(s) + 1)\)}p\right)
\end{align*}
since $(s^4/p) = 1$. Now let $s G(s) = 1 + kp$ for some $k \in \Z$. Then
 $$ q_p(G(s)) = q_p(s G(s)) - q_p(s) = q_p(1) - k - q_p(s) = -k - q_p(s) \pmod p $$
and
 $$ q_p(G(s) + 1)  \equiv  q_p(s G(s) + s) - q_p(s) = q_p(s+1) - \frac{k}{s+1} - q_p(s) \pmod p. $$
Therefore
 $$ q_p(G(s)) - (s+1)q_p(G(s) + 1)  \equiv  s q_p(s) - (s+1) q_p(s+1) \pmod p, $$
and $\vartheta_{p,G(s)} = \vartheta_{p,s}$ as desired.
\end{proof}
A natural question is whether there is some geometric explanation for the above relations. For example, are there maps between $\cF_s$, $\cF_{F(s)}$ and $\cF_{G(s)}$ that force them to have the same reduction type?

\section{Numerical Results} 

We have computed $N_{0, \pm 1}(p)$ for all $p < 10^7$ using a C implementation of a fairly naive algorithm.

Table~\ref{tab:moments} gives a statistical summary of the distribution of $N_1(p)$. The data for $N_{-1}(p)$ is similar and is not shown. The table has been constructed in the following way. Following the results of the previous section, $N_1(p)$ should behave like a sum of $p/6$ independent random variables that take the values $6$ and $0$ with equal probability. Each such variable has mean $3$ and variance $9$, so under this assumption we expect $N_1(p)$ to have mean $3(p/6) = p/2$ and variance $9(p/6) = 3p/2$. We treat each prime $p$ as an `observation' of $N_1(p)$, and define a normalised random variable
 $$ X = \frac{N_1(p) - p/2}{\sqrt{3p/2}}. $$
Table~\ref{tab:moments} shows the moments of $X$, compared to the moments of the standard normal distribution. The closeness of the fit strongly supports the assumptions of our model.

\begin{table}\centering
\caption{Moments of normalised $N_1(p)$, for $3 \leq p < 10^7$}
\begin{tabular}{lrr}
\toprule
$k$ & $E(X^k)$ & $E(N^k)$ \\
\midrule
1 &  $-0.00085$ & 0 \\
2 &   $0.99979$ & 1 \\
3 &   $0.00051$ & 0 \\
4 &   $3.00059$ & 3 \\
5 &   $0.00403$ & 0 \\
6 &  $14.92162$ & 15 \\
7 &  $-0.07897$ & 0 \\
8 & $102.90932$ & 105 \\
\bottomrule
\end{tabular}
\label{tab:moments}
\end{table}

Table \ref{tab:zero} summarises the behaviour of $N_0(p)$. If we assume that $\vartheta_{p,s}$ takes the value $0$ with probability $1/p$ on each of the $p/6 + O(1)$ orbits, then we expect $N_0(p)/6$ to behave like a Poisson random variable with mean $1/6$. In Table \ref{tab:zero}, the column $T_2(k)$ counts the number of primes $p = 2 \pmod 3$ such that $N_0(p) = 6k$; it closely matches the last column, which shows the value predicted by the Poisson model. For $p = 1 \pmod 3$ we must modify this slightly, because we know that automatically $\vartheta_{p,s} = 0$ when $s$ is one of the roots of $s^2 + s + 1 = 0 \pmod p$ (see Section \ref{sec:bounding}). This effectively increases $N_0(p)$ by two. In the table, we correspondingly define $T_1(k)$ to be the number of primes $p = 1 \pmod 3$ such that $N_0(p) = 6k + 2$. Again this closely matches the Poisson model.

Finally, one computes that 
$$
\vartheta_{p,1} = \(\frac{-2q_p(2)}p\)
$$ 
By definition, the latter is zero modulo $p$ if and only if $p$ is a \emph{Wieferich prime}. There are only two known Wieferich primes, namely $1093$ and $3511$. For these two primes, $\vartheta_{p,s}$ is zero on the orbit $\{1, -1/2, -2\}$. In fact $N_0(3511) = 5$ and $N_0(1093) = 17$ .

\begin{table}\centering
\caption{Frequency table for $N_0(p)$, for $5 \leq p < 10^7$, $p \neq 1093, 3511$}
\begin{tabular}{lrrr}
\toprule
$k$ & $T_1(k)$ & $T_2(k)$ & Poisson prediction \\
\midrule
0 & 281486 & 281127 & 281277 \\
1 & 46619 & 47088 & 46879 \\
2 & 3860 & 3923 & 3906 \\
3 & 217 & 231 & 217.03 \\
4 & 10 & 14 & 9.043 \\
\bottomrule
\end{tabular}
\label{tab:zero}
\end{table}

\section{Comments}

We remark that in the range of our calculations of $N_{0,\pm 1}(p)$,  none of the 
asymptotically faster algorithms of Sections~\ref{sec:N exact} and~\ref{sec:N approx}
were used. We however believe that these algorithms are not only of theoretic interest
and can become more practically useful for large values of $p$. 

We note that the definition of  $\vartheta_{p,s}$ makes sense for any integer $s$ with 
$\gcd(s(s+1),p)=1$ and then it obviously becomes a periodic function of $s$ 
with period $p^2$. Furthermore, a more careful analysis shows that it is 
periodic with period $p$. Indeed, using~\eqref{eq:mod p}, we see that
$$
(u+p)q_p(u+p) \equiv uq_p(u+p) \equiv uq_p(u) -1.
$$
Now, recalling~\eqref{eq:qo expan}, we derive 
\begin{equation*}
\begin{split}
q_p& \((s+p)^{s+p}/(s+p+1)^{s+p+1}\)  \\
& \qquad\equiv (s+p)q_p (s+p) - (s+p+1)q_p(s+p+1)\\
& \qquad\equiv sq_p(s) - (s+1)q_p(s+1) \equiv q_p\(s^s/(s+1)^{s+1}\)  \pmod p.
\end{split}
\end{equation*}
This explains why it is enough to study the values of $\vartheta_{p,s}$ only for $s \in \cS_p$.
This can also be used in  numerical tests as one can use any values of $s$ for which the relevant values of $q_p$ are easy to are easy to compute. 

For instance, assume that we want to compute $N_{0,\pm 1}(p)$ for 
many  primes $p \le X$. 
Then we can try to find a large set $\cS$ of  $s \in [1,X]$ (together with their factorisations) such that 
$s$ and $s+1$ are $Y$-smooth (that is, whose prime factors are all less than $Y$).
Then, for each $p$, 
we compute and store the values of $q_p(\ell)$ for all primes $\ell \le Y$.
After this $\vartheta_{p,s}$, $s \in \cS$, can be computed very rapidly. 
Certainly for this approach to work, 
we need to know for what $Y$ there many such integers $s$ 
and how they are  distributed in $[1,X]$.  However, we remark that the
questions of counting and generating smooth pairs $(s,s+1)$ is apparently 
very difficult, see~\cite[Section~6]{LagSound1}.

We can   expand further the set 
of potentially ``friendly'' test points by considering triples 
$(u,v,w)$ of $Y$-smooth positive integers $u,v,w \le X$. After this 
we compute  $t \equiv u/w \pmod {p^2}$ and then compute 
$s \in \cS_p$ with $s \equiv t \pmod p$. We now have $\vartheta_{p,s} = \vartheta_{p,t}$,
while $t$ is a ``friendly'' value.
Unfortunately counting and generating such triples $(u,v,w)$ is 
still a difficult problem, see~\cite{LagSound1,LagSound2}.

\section*{Acknowledgments}

The authors are very grateful to \'Etienne Fouvry for clarifying discussions 
of the issue of existence and generating neighbouring smooth numbers.

The research of  D.~H. was partially supported
by  Australian Research Council Grant DE120101293
and that of  I.~E.~S. by Australian Research Council Grant  DP1092835.

\end{document}